\RequirePackage{fix-cm}
\documentclass[smallextended]{svjour3}       
\smartqed  
\usepackage{threeparttable}
\usepackage{graphicx}
\usepackage{multirow}%
\usepackage{amsmath,amssymb,amsfonts}%
\usepackage[numbers]{natbib}
\usepackage{mathrsfs}%
\usepackage[title]{appendix}%
\usepackage{xcolor}%
\usepackage{textcomp}%
\usepackage{manyfoot}%
\usepackage{booktabs}%
\usepackage{marvosym}
\usepackage{bm}
\usepackage{algorithm}%
\usepackage{algorithmicx}%
\usepackage{algpseudocode}%
\usepackage{listings}%
\usepackage{float,subcaption,geometry,pdfpages,tabularx}
\usepackage{array}
\usepackage{threeparttable}
\numberwithin{theorem}{section}
\numberwithin{lemma}{section}
\numberwithin{definition}{section}

\begin{document}

\title{
A Multi-Invariant Preserving Discrete Gradient Methods
}


\author{Haodong Pu      \and
        Maohua Ran
}


\institute{
           Maohua Ran (\Letter)\at
               School of Mathematical Sciences, Sichuan Normal University, Chengdu 610066, China\\
              \email{maohuaran@163.com} 
              \and
              Haodong Pu \at
              School of Mathematical Sciences, Sichuan Normal University, Chengdu 610066, China\\
              \email{puhaodong10@163.com}   
}

\date{Received: date / Accepted: date}

\maketitle

\begin{abstract}
This work introduces a novel structure-preserving methods for conservative systems based on a predictor-corrector strategy. The framework applies a discrete gradient correction to predictions generated by explicit one-step or multi-step schemes, which preserves nonlinear invariants while maintaining the accuracy order of the original predictor. This approach extends naturally to problems requiring simultaneous conservation of multiple invariants. Under mild conditions, conservation properties, solvability, numerical accuracy, and stability are established. Long-term numerical simulations on Lotka-Volterra systems, sine-Gordon equations, rigid body dynamics, and Kepler problems demonstrate improved robustness and conservation properties compared to existing projection and relaxation methods.
\keywords{Structure-preserving methods\and Discrete gradient operators\and Predictor-corrector methods\and Multi-invariant conservation\and Numerical stability}
\subclass{65L05\and 65L06\and 65P10\and 65L20\and 65L70}
\end{abstract}

\maketitle
\section{Introduction}\label{sec1}
Structure-preserving numerical methods are designed to maintain key geometric and physical structures-such as symmetries, invariants, and symplecticity-inherent in differential equations. Over the past few decades, significant theoretical and computational advances have been made in this field. Among these developments, the preservation of invariants plays a particularly vital role, as it is crucial for simulating long-term dynamics accurately in scientific and engineering applications \cite{sharma2020review}.

This work focuses on conservative systems of the form
\begin{equation}\label{ode}
\begin{cases}
y'(t)=f(y(t)),\quad t\in [0,T], \\
y(0)=y_0,
\end{cases}
\end{equation}
where $f:\mathbb{R}^d\to \mathbb{R}^d$ is sufficiently smooth, $y_0\in \mathbb{R}^d$ is the initial value. The system admits a first integral $H(y)$ such that
$$H(y(t)) \equiv H(y_0),\quad t\in[0,T].$$
Consequently, the solution trajectory is confined to the invariant manifold 
$\mathcal{M} = \{ y \in \mathbb{R}^d \mid H(y) = H(y_0)\}$,
a geometric constraint that structure-preserving numerical methods must respect by ensuring the discrete solution ${y_n}$ satisfies $H(y_n) = H(y_0)$.

Significant progress has been made in structure-preserving numerical methods, with Runge-Kutta (RK) methods preserving arbitrary linear invariants \cite{Cooper} and symplectic RK methods conserving quadratic invariants, though no RK method can preserve cubic or higher-degree polynomial invariants \cite{Hairer}. This limitation has spurred the development of specialized algorithms, including for Hamiltonian systems: variational integrators based on discrete variational principles \cite{Marsden}, Hamiltonian boundary value methods that exactly preserve the Hamiltonian \cite{Brugnano}, energy-momentum integrators for mechanical systems \cite{Simo}, and discrete Hamiltonian methods using discrete directional derivatives \cite{gonzalez}. Furthermore, techniques originally developed for gradient flows-invariant energy quadratization \cite{Yang} and scalar auxiliary variable approaches \cite{SJ}-have been extended to construct energy-preserving schemes for Hamiltonian systems \cite{Liu, Fu}. For general invariants, widely used methods include the discrete gradient method \cite{mclachlan}, Lagrange multiplier method \cite{Gear}, and projection method \cite{Hairer2000}; however, these typically require solving large-scale equation systems with considerable computational costs, and the simultaneous preservation of multiple invariants remains a fundamental challenge in the field.
To address this, efficient strategies such as the projected method \cite{Stuart} and the relaxed Runge-Kutta (RRK) method \cite{del, Ketcheson} have been introduced, which reduce computational complexity by solving only one scalar equation per time step. Despite these improvements, three practical challenges remain in the design of invariant-preserving integrators:
\begin{itemize}
\item[$\cdot$] \textbf{Low-order accuracy}: Classical discrete gradient methods are generally limited to first- or second-order accuracy, since constructing higher-order discrete gradients that simultaneously satisfy the discrete chain rule presents substantial analytical difficulties.
\item[$\cdot$] \textbf{Poor large-step robustness}: Projection and relaxation-based methods (such as RRK) exhibit robustness issues when the predictor direction is nearly orthogonal to $\nabla H$, or when the time step exceeds stability limits.
\item[$\cdot$] \textbf{Non-modular multi-invariant conservation}: Most existing methods are designed for single invariants. For systems with multiple invariants, traditional projection techniques require iterative projections or more intricate geometric treatments, while multiple-relaxation Runge-Kutta \cite{Biswas} and multiple-relaxation exponential Runge-Kutta methods \cite{LDF} are inherently constrained by their underlying Runge-Kutta or exponential Runge-Kutta frameworks. Moreover, the existence of their relaxation parameters-and consequently, of the numerical solution-is guaranteed only under sufficiently strict time-step restrictions.
\end{itemize}

To overcome these limitations, this work introduces a discrete gradient correction framework with three core features:
\begin{itemize}
    \item[$\cdot$] \textbf{Compatibility}: Enables decoupling of numerical accuracy from structure preservation, allowing any explicit one-step or multi-step method to serve as predictor while maintaining invariant conservation without order reduction through the correction procedure.    
    \item[$\cdot$] \textbf{Well-posedness}: Theoretically establishes existence of the numerical solution for arbitrary time steps, while guaranteeing solution uniqueness under mild assumptions for sufficiently small steps.    
    \item[$\cdot$] \textbf{Scalability}: The approach naturally generalizes to conserve multiple invariants simultaneously without modifying the core algorithmic structure.
\end{itemize}

The remainder of this paper is organized as follows. 
Section~\ref{sec2} introduces basic notations, discrete gradients, and preliminary lemmas; 
Section~\ref{sec3} presents discrete gradient correction methods for single invariants, analyzing convergence order, conservation, well-posedness, and stability; 
Section~\ref{sec4} extends these methods to systems with multiple invariants and provides theoretical analysis; 
Section~\ref{sec5} demonstrates effectiveness through numerical experiments; 
The conclusion summarizes the main contributions.

\section{Preliminaries}\label{sec2}
This section establishes the basic notation, introduces the concept of discrete gradients, and presents several preliminary results that will be used throughout the paper.

Let \(\mathbf{w}\) and \(\mathbf{v}\) denote vectors in \(\mathbb{R}^d\). The Euclidean inner product and norm are defined as \(\mathbf{w} \cdot \mathbf{v}\) and \(\|\mathbf{w}\| = \sqrt{\mathbf{w} \cdot \mathbf{w}}\), respectively.
\begin{definition}[Discrete gradient,\cite{mclachlan}]\label{df1}
For a continuously differentiable function $H$, a discrete gradient is a continuous mapping $\overline{\nabla}H$ satisfying 
\begin{equation*}
        \overline{\nabla}H(x,y)\cdot(y-x)=H(y)-H(x)~\mbox{and}~
        \overline{\nabla}H(y,y)=\nabla H(y),~\forall x,y\in\mathbb{R}^d.
\end{equation*}
\end{definition}

Common discrete gradient formulations include the following:
\begin{enumerate}
    \item[$\cdot$]\textit{Gonzalez discrete gradient} \cite{gonzalez}:
    \begin{align}\label{DG1}
        \overline{\nabla}H(x,y)=\nabla H\left(\frac{x+y}{2}\right)+\frac{H(y)-H(x)-\nabla H\left(\frac{x+y}{2}\right)\cdot(y-x)}{\|x-y\|^2}(y-x).
    \end{align}    
    \item[$\cdot$]  \textit{Mean value discrete gradient} \cite{Harten}:
    \begin{align}\label{DG2}
        \overline{\nabla}H(x,y)=\int^1_0\nabla H\big((1-s)x+sy\big)\mathrm{d}s.
    \end{align}    
    \item[$\cdot$]  \textit{Coordinate increment discrete gradient} \cite{Itoh}:
    \begin{align}\label{DG3}
        \overline{\nabla} H(x,y)=\begin{pmatrix}
            \dfrac{H(y_1,x_2,\ldots,x_n)-H(x)}{y_1-x_1} \\
            \dfrac{H(y_1,y_2,x_3,\ldots,x_n)-H(y_1,x_2,\ldots,x_n)}{y_2-x_2} \\
            \vdots \\
            \dfrac{H(y)-H(y_1,\ldots,y_{n-1},x_n)}{y_n-x_n}
        \end{pmatrix}.
    \end{align}
\end{enumerate}

\begin{lemma}[Brouwer's fixed point theorem, \cite{ciarlet}]\label{BF}
Let $U$ be a compact convex subset of a finite-dimensional normed vector space. Then, continuous mapping $g: U \to U$ admits at least one fixed point.
\end{lemma}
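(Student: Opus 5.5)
The statement is Brouwer's fixed point theorem, which the paper cites from \cite{ciarlet} rather than proves. A self-contained argument proceeds in two stages. First reduce to the closed unit ball. Second prove the ball case through the no-retraction theorem.

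\emph{Reduction to the ball.} All norms on a finite-dimensional space are equivalent and continuity is preserved, so we may work in $\mathbb{R}^m$ with the Euclidean norm. Here $m$ is the dimension of the affine hull of $U$, after translating so that this hull is a linear subspace identified with $\mathbb{R}^m$.
\begin{itemize}
\item If $U$ is empty the statement is vacuous, and if it is a single point it is trivial. Otherwise $U$ has nonempty relative interior; translate so that $0$ lies in it.
\item The Minkowski gauge $p(x)=\inf\{t>0: x\in tU\}$ is then finite, positive away from $0$, positively homogeneous and continuous.
\item The radial map $\phi(x)=\frac{p(x)}{\|x\|}x$ for $x\neq 0$, with $\phi(0)=0$, is a homeomorphism of $U$ onto the closed unit ball $B^m$. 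Compactness of $U$ keeps the gauge bounded above and below on the unit sphere, which gives continuity of $\phi$ and of its inverse.
\item A fixed point of $\phi\circ g\circ\phi^{-1}$ on $B^m$ pulls back to a fixed point of $g$.
\end{itemize}
So it suffices to prove that every continuous $g:B^m\to B^m$ has a fixed point.

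\emph{Ball case via no-retraction.} Suppose $g(x)\neq x$ for all $x$. Define $r(x)$ as the point where the ray from $g(x)$ through $x$ meets $S^{m-1}$. Solving a quadratic in the ray parameter and taking the nonnegative root shows that $r$ is continuous and that $r|_{S^{m-1}}=\mathrm{id}$. Thus $r$ is a retraction of $B^m$ onto $S^{m-1}$, and it remains to show that no such retraction exists.

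\emph{No-retraction theorem (main obstacle).} This is the only genuinely topological step. I would first try an analytic argument, avoiding homology.
\begin{itemize}
\item Approximate $g$ uniformly by smooth maps $B^m\to B^m$ (Weierstrass approximation, then a slight radial shrink). If the approximants had fixed points, a limit point of those fixed points would be a fixed point of $g$ by compactness, so it suffices to treat smooth $g$. The resulting retraction is then smooth near the boundary, or can be made so after a standard modification.
\item Take a smooth retraction $r$ and set $r_t=(1-t)\mathrm{id}+t\,r$. Define $V(t)=\int_{B^m}\det Dr_t\,dx$.
\item $V(t)$ is a polynomial in $t$. For small $t$, $r_t$ is a diffeomorphism of $B^m$ onto itself, so $V(t)=\mathrm{vol}(B^m)$ near $0$, and therefore $V$ is constant in $t$.
\item At $t=1$, the map $r$ takes values in $S^{m-1}$, so $|r|^2\equiv1$ gives $Dr^{T}r=0$. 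Hence $Dr$ is singular everywhere and $V(1)=0$, a contradiction.
\end{itemize}
An alternative route is Sperner's lemma with simplicial subdivisions; the same compactness limit would then replace the smoothing step.
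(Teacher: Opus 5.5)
The paper does not prove this lemma; it cites it from Ciarlet as a black box, so there is no in-paper argument to compare with. Your outline is a correct, self-contained classical proof: reduce to the ball, build a retraction from a fixed-point-free map, and use the analytic no-retraction argument, in which $V(t)=\int_{B^m}\det Dr_t\,dx$ is a polynomial that is constant near $t=0$ but vanishes at $t=1$. A few points need tightening.

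\emph{Empty set.} The case $U=\emptyset$ is not vacuous. The empty map $\emptyset\to\emptyset$ is continuous and has no fixed point, so the lemma genuinely requires $U\neq\emptyset$. The paper omits this hypothesis, harmlessly, since its $U$ is a closed ball.

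\emph{Smoothness of $r$.} Your hedge about smoothness near the boundary is unnecessary. Write $u=x-g(x)$ and $r(x)=x+t(x)u$, where
$t(x)=\bigl(-x\cdot u+\sqrt{\Delta}\bigr)/\|u\|^2$ and $\Delta=(x\cdot u)^2+\|u\|^2(1-\|x\|^2)$.
On $B^m$ one has $\Delta=0$ only if $\|x\|=1$ and $x\cdot g(x)=1$, and by Cauchy--Schwarz this forces $g(x)=x$. So for a smooth fixed-point-free $g$, the map $r$ is smooth on a neighbourhood of $B^m$. Moreover, on $S^{m-1}$ the other root $-2\,x\cdot u/\|u\|^2$ is strictly negative, which is exactly why $r|_{S^{m-1}}=\mathrm{id}$.

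\emph{The diffeomorphism claim for small $t$.} This is the step carrying the real weight, and it should be argued rather than asserted.
Injectivity holds for $t<1/L$, where $L=\max_{B^m}\|Dr-I\|$.
The determinant satisfies $\det Dr_t>0$ for small $t$ by uniform continuity.
For surjectivity, note $\|r_t(x)\|\le(1-t)\|x\|+t<1$ on $\operatorname{int}B^m$. The image $r_t(\operatorname{int}B^m)$ is open by the inverse function theorem. It is also closed in $\operatorname{int}B^m$, by compactness of $B^m$ together with the fact that $r_t$ fixes $S^{m-1}$ pointwise. By connectedness it therefore equals $\operatorname{int}B^m$.
Only then does the change of variables give $V(t)=\mathrm{vol}(B^m)$.

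\emph{The reduction to the ball.} Your gauge construction is correct, and since $\phi$ is a continuous bijection from a compact space onto a Hausdorff one, continuity of $\phi^{-1}$ is automatic. There is, however, a shorter route that avoids affine hulls, relative interiors and the gauge altogether. Enclose $U$ in a closed Euclidean ball $B$ and let $P_U$ be the nearest-point projection onto $U$, which is continuous and in fact $1$-Lipschitz. Apply the ball case to $g\circ P_U:B\to U\subset B$. Any fixed point lies in $U$, where $P_U$ is the identity, so it is a fixed point of $g$.
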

\begin{lemma}[Banach fixed point theorem, \cite{ciarlet}]\label{BAF}
Let $(X,d)$ be a complete metric space. Any contraction mapping $f:X\to X$ possesses exactly one fixed point $x^{*}\in X$.  Furthermore, for arbitrary $x_0\in X$, the sequence 
$\{x_n\}_{n=0}^{\infty}$ defined by
    \[x_{n+1}=f(x_n),\quad n\geq 0,\]
    converges to $x^{*}$ as $n\to\infty$, satisfying the error estimate:
    \[\|x_n-x^{*}\|\leq \frac{d(f(x_0),x_0)}{1-k}\,k^n, \quad n\geq 0,\]
    where $k~(0<k<1)$ denotes the contraction constant.
\end{lemma}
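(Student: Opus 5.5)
The statement is the Banach fixed point theorem. I will follow the standard Picard iteration argument in three steps: show the iterates form a Cauchy sequence, identify the limit as a fixed point, and prove uniqueness. Throughout I write the estimate with the metric $d$ (the norm in the statement being the special case $d(x,y)=\|x-y\|$).

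\textbf{Step 1 (geometric decay of increments).} Fix $x_0\in X$ and set $x_{n+1}=f(x_n)$. Since $d(f(x),f(y))\le k\,d(x,y)$ with $0<k<1$, induction gives
\[
d(x_{n+1},x_n)\le k^n d(x_1,x_0)=k^n d(f(x_0),x_0).
\]

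\textbf{Step 2 (Cauchy property and existence).} For $m>n$, the triangle inequality and the geometric series give
\[
d(x_m,x_n)\le \sum_{j=n}^{m-1}k^j d(x_1,x_0)\le \frac{k^n}{1-k}\,d(f(x_0),x_0).
\]
The right-hand side tends to $0$, so $\{x_n\}$ is Cauchy. By completeness of $(X,d)$ it converges to some $x^*\in X$. A contraction is Lipschitz, hence continuous, so passing to the limit in $x_{n+1}=f(x_n)$ yields $x^*=f(x^*)$.

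\textbf{Step 3 (uniqueness).} If $x^*$ and $y^*$ are both fixed points, then
\[
d(x^*,y^*)=d(f(x^*),f(y^*))\le k\,d(x^*,y^*).
\]
Since $k<1$, this forces $d(x^*,y^*)=0$, i.e. $x^*=y^*$.

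\textbf{Error estimate.} In the bound from Step 2, fix $n$ and let $m\to\infty$. Continuity of the metric gives
\[
d(x^*,x_n)\le \frac{k^n}{1-k}\,d(f(x_0),x_0),
\]
which is the claimed estimate.

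The only point requiring care is Step 2. One must estimate $d(x_m,x_n)$ uniformly in $m$ through the geometric series bound, rather than relying only on consecutive increments. That uniform bound supplies both the Cauchy property and, after letting $m\to\infty$, the error estimate.
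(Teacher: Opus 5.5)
Your argument is correct and complete. It is the standard Picard-iteration proof: a geometric bound on $d(x_m,x_n)$ uniform in $m$, completeness, continuity of $f$, uniqueness from $k<1$, and the error estimate obtained by letting $m\to\infty$, with $\|x_n-x^*\|$ rightly read as $d(x_n,x^*)$. The paper gives no proof of this lemma and only cites Ciarlet, where it is proved by essentially the same argument.
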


\section{Invariant-Preserving Methods}\label{sec3}
Let \( N \) be a positive integer, \( h = T/N \) the step size, and \( t_n = nh \) for \( n = 0, 1, \dots, N \). Consider an explicit \( m \)-step method producing a predictor:
\begin{align}\label{DGC1}
    \overline{y}_{n+1} = \phi_h(y_n, y_{n-1}, \dots, y_{n-m}).
\end{align}
The discrete gradient correction (DGC) method for solving \eqref{ode} is then defined by the correction step:
\begin{align}\label{DGC}
        y_{n+1} = \overline{y}_{n+1} + \frac{H(y_0) - H(\overline{y}_{n+1})}{\left\| \overline{\nabla}H(\overline{y}_{n+1},y_{n+1}) \right\|^2} \overline{\nabla}H(\overline{y}_{n+1},y_{n+1}).
\end{align}

\begin{remark}
 When \( m = 0 \), the predictor \eqref{DGC1} reduces to an explicit one-step method. This special case finds frequent application in practical computations and will be demonstrated in subsequent numerical examples.
\end{remark}

\begin{remark}

 Unlike standard projection methods, which are merely geometric post-processing operations, the DGC method preserves the structure of the underlying discrete conservation laws.

\end{remark}

Since the correction equation \eqref{DGC} is implicit, we analyze the existence and uniqueness of its solution. For a fixed predictor $\overline{y}_{n+1}$, define the iteration function
\begin{align}\label{DGCb}
 g(\overline{y}_{n+1}, \mathbf{v}) = \overline{y}_{n+1} + \frac{H(y_0) - H(\overline{y}_{n+1})}{\left\| \overline{\nabla} H(\overline{y}_{n+1}, \mathbf{v}) \right\|^2} \overline{\nabla} H(\overline{y}_{n+1}, \mathbf{v}).
\end{align}
A solution $y_{n+1}$ of \eqref{DGC} is then identified as a fixed point of $G$, satisfying $y_{n+1} = g(\overline{y}_{n+1},y_{n+1})$.

\begin{theorem}[Existence]\label{T31}
If there exists a constant $C > 0$ such that $\left\| \overline{\nabla}H(\overline{y}_{n+1},v) \right\| > C$ for all $v$ in some neighborhood of $\overline{y}_{n+1}$, 
 then the DGC method \eqref{DGC1}-\eqref{DGC} has at least one solution.
\end{theorem}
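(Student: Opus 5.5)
The plan is to apply Brouwer's fixed point theorem (Lemma~\ref{BF}) to the map $\mathbf{v}\mapsto g(\overline{y}_{n+1},\mathbf{v})$ from \eqref{DGCb}, restricted to a closed ball centred at the predictor. Write $\bar y=\overline{y}_{n+1}$ and $\delta=H(y_0)-H(\bar y)$. Let $\mathcal{N}$ be the neighborhood in the hypothesis, and fix a radius $r>0$ such that the closed ball $B_r=\{\mathbf{v}:\|\mathbf{v}-\bar y\|\le r\}\subset\mathcal{N}$. Then $B_r$ is a compact convex subset of $\mathbb{R}^d$.

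The first step is continuity. The discrete gradient $\overline{\nabla}H(\bar y,\cdot)$ is continuous by Definition~\ref{df1}. On $B_r$ we have $\|\overline{\nabla}H(\bar y,\mathbf{v})\|>C>0$, so the denominator in \eqref{DGCb} never vanishes. Hence $g(\bar y,\cdot)$ is continuous on $B_r$.

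The second step is the self-mapping estimate:
\begin{equation*}
\|g(\bar y,\mathbf{v})-\bar y\|=\frac{|\delta|}{\|\overline{\nabla}H(\bar y,\mathbf{v})\|}<\frac{|\delta|}{C}.
\end{equation*}
So $g(\bar y,\cdot)$ maps $B_r$ into itself whenever $|\delta|\le Cr$. This is the main obstacle, because it requires the defect $|H(y_0)-H(\overline{y}_{n+1})|$ to be small relative to the size of the neighborhood.

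I would handle this in two ways.

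\emph{Reading of the hypothesis.} If $\mathcal{N}$ may be taken as large as needed, choose $r=|\delta|/C$. If $\delta=0$, then $\mathbf{v}=\bar y$ is already a solution, since the correction term vanishes.

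\emph{Inductive reading.} Alternatively, argue inductively: assume $H(y_n)=H(y_0)$ and that the predictor is consistent. Then $\delta=H(y_n)-H(\bar y)=O(h^{p+1})$, so for $h$ small enough $|\delta|\le Cr$ holds for any fixed $r$. I would state the argument with the ball of radius $|\delta|/C$ and note that it is contained in the given neighborhood.

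With continuity and $g(\bar y,B_r)\subseteq B_r$ in hand, Lemma~\ref{BF} gives a fixed point $\mathbf{v}^*\in B_r$. Setting $y_{n+1}=\mathbf{v}^*$ satisfies \eqref{DGC}, which completes the existence proof. As a by-product, the discrete chain rule then gives $H(y_{n+1})=H(y_0)$ directly from \eqref{DGC}.
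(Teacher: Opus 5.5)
Your proposal is correct and is essentially the paper's proof: Brouwer's theorem applied to $g(\overline{y}_{n+1},\cdot)$ on the closed ball of radius $|H(y_0)-H(\overline{y}_{n+1})|/C$ about the predictor. It rests on the self-map estimate $\|g(\overline{y}_{n+1},\mathbf{v})-\overline{y}_{n+1}\| = |H(y_0)-H(\overline{y}_{n+1})|/\|\overline{\nabla}H(\overline{y}_{n+1},\mathbf{v})\| \le |H(y_0)-H(\overline{y}_{n+1})|/C$ and on the continuity of the discrete gradient.

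The paper tacitly assumes that this ball lies inside the neighborhood where $\|\overline{\nabla}H(\overline{y}_{n+1},\mathbf{v})\|>C$, which is your first reading, so flagging that containment explicitly is worthwhile; your small-$h$ alternative would additionally need $C$ and the neighborhood radius to be uniform in $h$ and $n$.
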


\begin{proof}
Define 
\[U=\left\{v|\left\|v-\overline{y}_{n+1}\right\|\le \frac{1}{C}|H(y_0)-H(\overline{y}_{n+1})|\right\}.\]
It is straightforward to verify that $U$ is compact and convex. Noticing that  
    \[\left\|g(\overline{y}_{n+1},v)-\overline{y}_{n+1}\right\|=\frac{1}{\left \| \overline{\nabla}H(\overline{y}_{n+1},v) \right \|}|H(y_0)-H(\overline{y}_{n+1})|\le \frac{1}{C}|H(y_0)-H(\overline{y}_{n+1})|,\]
    which implies $g(\overline{y}_{n+1}, v) \in U$. By Definition~\ref{df1}, the discrete gradient $\overline{\nabla}H(\overline{y}_{n+1}, v)$ is continuous in $v$. Therefore, $g(\overline{y}_{n+1}, v)$ is continuous on $U$. Applying Brouwer's fixed-point theorem (Lemma~\ref{BF}) to the continuous function $g(\overline{y}_{n+1}, \cdot)$ mapping the compact convex set $U$ into itself, we conclude that $g$ has at least one fixed point in $U$. This completes the proof.
\end{proof}

\begin{theorem}[Uniqueness]\label{T32}
Under the assumptions of Theorem~\ref{T31}, assume that the discrete gradient is locally Lipschitz continuous: there exists a constant \( {L_0} > 0 \) such that
\[
\left\|\overline{\nabla}H(\overline{y}_{n+1},w) - \overline{\nabla}H(\overline{y}_{n+1},v)\right\| \le {L_0} \left\| w - v \right\|
\]
for all \( v, w \) in a neighborhood of \( \overline{y}_{n+1} \). Then, for sufficiently small step size \( h \), the DGC method \eqref{DGC1}-\eqref{DGC} has a unique solution.
\end{theorem}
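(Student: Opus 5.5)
Since Theorem~\ref{T31} already gives existence, the plan is to show that $g(\overline{y}_{n+1},\cdot)$ is a contraction on the compact convex set
\[
U=\left\{v \,\middle|\, \left\|v-\overline{y}_{n+1}\right\|\le \tfrac{1}{C}\left|H(y_0)-H(\overline{y}_{n+1})\right|\right\}
\]
from that proof, and then invoke Banach's fixed point theorem (Lemma~\ref{BAF}). Any fixed point of $g$ automatically lies in $U$, as the estimate in the proof of Theorem~\ref{T31} shows, so uniqueness in $U$ means uniqueness globally. First I would note that $U$ is a complete metric space and that $g$ maps $U$ into itself; both facts were already established. To use the local hypotheses, I need $U$ to lie inside the neighborhood of $\overline{y}_{n+1}$ on which the lower bound $C$ and the Lipschitz constant $L_0$ hold. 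For this I would use the smallness of $\delta_n:=|H(y_0)-H(\overline{y}_{n+1})|$.

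The key quantitative input is that $\delta_n=O(h^{p+1})$ (or at least $\delta_n\to 0$ as $h\to0$). Here $y_n$ satisfies $H(y_n)=H(y_0)$ exactly, by construction of the previous corrected steps. The predictor is a consistent explicit method, so $\overline{y}_{n+1}-y_n$ is close to the exact flow, which conserves $H$. Hence $H(\overline{y}_{n+1})-H(y_0)$ is bounded by the local truncation error times a bound on $\nabla H$ near the trajectory. I would state this as a consistency assumption on $\phi_h$. It is the point where the phrase ``sufficiently small $h$'' enters.

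Next comes the contraction estimate. Write $a(v)=\overline{\nabla}H(\overline{y}_{n+1},v)$, and recall that $\|a(v)\|> C$ on $U$ and that $a$ is $L_0$-Lipschitz there. The map $\psi(a)=a/\|a\|^2$ is Lipschitz on $\{\|a\|\ge C\}$. Indeed,
\[
\frac{a}{\|a\|^2}-\frac{b}{\|b\|^2}
\]
has norm exactly $\|a-b\|/(\|a\|\|b\|)$, because $\psi$ is inversion in the unit sphere. So
\[
\|g(\overline{y}_{n+1},w)-g(\overline{y}_{n+1},v)\|\le \delta_n\frac{L_0}{C^2}\|w-v\|.
\]
The contraction constant is therefore $k=\delta_n L_0/C^2$, and $k<1$ once $h$ is small enough that $\delta_n<C^2/L_0$. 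Lemma~\ref{BAF} then yields a unique fixed point in $U$. As a bonus, it also shows that fixed-point iteration converges.

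I expect the main obstacle to be the bookkeeping around $\delta_n\to0$. The argument must make sure that $C$ and $L_0$ can be taken uniform for all $n$ and for all small $h$, since the neighborhoods depend on $\overline{y}_{n+1}$. It must also make sure that $U$ fits inside these neighborhoods. I would handle this by assuming that the hypotheses hold uniformly on a compact neighborhood of the exact trajectory (or of the level set $\mathcal M$). Otherwise, one can simply read the constants as fixed per step, in which case only $\delta_n$ small is needed.
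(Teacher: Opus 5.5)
Your core argument is the paper's. The identity $\|a/\|a\|^2-b/\|b\|^2\|=\|a-b\|/(\|a\|\,\|b\|)$ gives the contraction constant $L_0|H(y_0)-H(\overline{y}_{n+1})|/C^2$, which is below $1$ for small $h$, and Banach's theorem concludes.

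You are in fact more careful than the paper in three places:
\begin{itemize}
\item You run Banach on the closed ball $U$, which is complete and mapped into itself once it lies inside the neighborhood where the hypotheses hold. The paper applies Banach on an unspecified neighborhood and checks neither property.
\item You prove the inversion identity, which the paper uses without comment.
\item For one-step predictors, you bound $|H(\overline{y}_{n+1})-H(\Phi_h(y_n))|=\mathcal{O}(h^{p+1})$, with $\Phi_h$ the exact flow and $H(y_n)=H(y_0)$. This avoids the paper's appeal to convergence of the predictor.
\end{itemize}

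\textbf{The step that fails} is the sentence ``any fixed point of $g$ automatically lies in $U$, so uniqueness in $U$ means uniqueness globally.'' The identity
$\|g(\overline{y}_{n+1},v)-\overline{y}_{n+1}\|=|H(y_0)-H(\overline{y}_{n+1})|/\|\overline{\nabla}H(\overline{y}_{n+1},v)\|$
gives the bound $|H(y_0)-H(\overline{y}_{n+1})|/C$ only where the lower bound $C$ is assumed, i.e.\ near $\overline{y}_{n+1}$. Far away, the discrete gradient may be small, and a fixed point can then sit far from $\overline{y}_{n+1}$.

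This really happens. Take the harmonic oscillator with $H(y)=\|y\|^2$, a forward Euler predictor, and the mean value discrete gradient $\overline{\nabla}H(x,y)=x+y$. Write $\|y_0\|=r$ and $\overline{y}_{n+1}=se$ with $\|e\|=1$ and $s=r\sqrt{1+h^2}$. The hypotheses hold on the ball of radius $s$ about $\overline{y}_{n+1}$ with $C=r$ and $L_0=1$. Besides the expected solution $v=re$, one has
\[
g(\overline{y}_{n+1},-re)=se+\frac{r^2-s^2}{(s-r)^2}\,(s-r)e=-re .
\]
So the correction equation has two solutions for every $h>0$. At the second one, $\|\overline{\nabla}H(\overline{y}_{n+1},-re)\|=s-r$ is tiny.

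What your argument, and the paper's, actually establishes is local uniqueness. There is exactly one fixed point in $U$. By the same identity, there is then exactly one in the whole neighborhood on which $\|\overline{\nabla}H(\overline{y}_{n+1},\cdot)\|>C$. Replace the global claim with this statement.
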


\begin{proof}
A standard estimation yields:
\begin{align*}
    &\left\| g(\overline{y}_{n+1},w) - g(\overline{y}_{n+1},v) \right\|\\
    =&~ \left| H(y_0) - H(\overline{y}_{n+1}) \right| \cdot \left\| \frac{\overline{\nabla}H(\overline{y}_{n+1},w)}{\left\| \overline{\nabla}H(\overline{y}_{n+1},w) \right\|^2} - \frac{\overline{\nabla}H(\overline{y}_{n+1},v)}{\left\| \overline{\nabla}H(\overline{y}_{n+1},v) \right\|^2} \right\| \\
     =&~ \frac{\left| H(y_0) - H(\overline{y}_{n+1}) \right|}{\left\| \overline{\nabla}H(\overline{y}_{n+1},w) \right\|\cdot\left\| \overline{\nabla}H(\overline{y}_{n+1},v) \right\|} \cdot \left\| \overline{\nabla}H(\overline{y}_{n+1},w)-\overline{\nabla}H(\overline{y}_{n+1},v) \right\| \\
     \leq &~ \frac{L_0}{C^2} \left| H(y_0) - H(\overline{y}_{n+1}) \right| \left\| w - v \right\|.
\end{align*}
Since the predictor \( \overline{y}_{n+1} \) is generated by a convergent explicit method, we have \( \overline{y}_{n+1} \to y(t_{n+1}) \) as \( h \to 0 \). By the continuity of \( H \), it follows that \( H(\overline{y}_{n+1}) \to H(y_0) \). It implies that
\[
0 < \frac{L_0}{C^2} \left| H(y_0) - H(\overline{y}_{n+1}) \right| < 1
\]
for sufficiently small $h$. By the Banach fixed-point theorem (Lemma~\ref{BAF}), there exists a unique $w$ satisfying $g(\overline{y}_{n+1},w) = w$. Therefore, the DGC method \eqref{DGC1}-\eqref{DGC} has a unique solution. This completes the proof.
\end{proof}


\begin{theorem}[Conservation] 
The DGC method \eqref{DGC1}-\eqref{DGC} preserves the invariant $H$.
\end{theorem}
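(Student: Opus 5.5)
The plan is to verify directly that any solution $y_{n+1}$ of the correction equation \eqref{DGC} satisfies $H(y_{n+1})=H(y_0)$, using only the discrete chain rule from Definition~\ref{df1}. Existence of such a solution is supplied by Theorem~\ref{T31}. The standing assumption there, that $\|\overline{\nabla}H(\overline{y}_{n+1},y_{n+1})\|>C>0$, guarantees that the denominator in \eqref{DGC} is nonzero, so the correction step is well defined.

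The key computation is as follows. Write $\mathbf{d}=\overline{\nabla}H(\overline{y}_{n+1},y_{n+1})$. Then \eqref{DGC} reads
\[
y_{n+1}-\overline{y}_{n+1}=\frac{H(y_0)-H(\overline{y}_{n+1})}{\|\mathbf{d}\|^2}\,\mathbf{d}.
\]
Take the inner product of both sides with $\mathbf{d}$. The right-hand side gives exactly $H(y_0)-H(\overline{y}_{n+1})$, since $\mathbf{d}\cdot\mathbf{d}=\|\mathbf{d}\|^2$. For the left-hand side, apply Definition~\ref{df1} with $x=\overline{y}_{n+1}$ and $y=y_{n+1}$. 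This yields
\[
\mathbf{d}\cdot(y_{n+1}-\overline{y}_{n+1})=H(y_{n+1})-H(\overline{y}_{n+1}).
\]
Equating the two expressions and cancelling $H(\overline{y}_{n+1})$ gives $H(y_{n+1})=H(y_0)$ for every $n$.

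We then note that the conclusion holds at each step independently. The correction always targets $H(y_0)$ rather than $H(y_n)$, and the predictor $\phi_h$ may be arbitrary. Hence no induction on $n$ is needed, and round-off-free conservation follows for all $n\ge 0$.

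There is essentially no hard step. The only point requiring care is that the discrete gradient must be evaluated at the same pair $(\overline{y}_{n+1},y_{n+1})$ that appears in the increment, which is precisely why the scheme is implicit. We would also remark that the case $H(\overline{y}_{n+1})=H(y_0)$ is trivially consistent, since then $y_{n+1}=\overline{y}_{n+1}$.
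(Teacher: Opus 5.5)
Your proof is correct and follows essentially the same route as the paper: take the inner product of the correction increment with $\overline{\nabla}H(\overline{y}_{n+1},y_{n+1})$, then apply the discrete chain rule from Definition~\ref{df1} to obtain $H(y_{n+1})-H(\overline{y}_{n+1})=H(y_0)-H(\overline{y}_{n+1})$. Your extra remarks, that the denominator must be nonzero and that each step targets $H(y_0)$ directly, are harmless refinements the paper leaves implicit.
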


\begin{proof}
By taking the inner product of both sides of the correction scheme
\[
y_{n+1} - \overline{y}_{n+1} = \frac{H(y_0) - H(\overline{y}_{n+1})}{\left\| \overline{\nabla}H(\overline{y}_{n+1},y_{n+1}) \right\|^2} \overline{\nabla}H(\overline{y}_{n+1},y_{n+1})
\]
with $\overline{\nabla}H(\overline{y}_{n+1},y_{n+1})$, we obtain the following identity:
\begin{align*}
    H(y_{n+1}) - H(\overline{y}_{n+1}) &= H(y_0) - H(\overline{y}_{n+1}).
\end{align*}
That is,
\begin{align*}
    H(y_{n+1}) = H(y_0).
\end{align*}
This completes the proof.
\end{proof}

\begin{theorem}[Consistency]
Under the assumptions of Theorem~\ref{T31}, if the one-step/multi-step method \eqref{DGC1} has a local truncation error of order $p$, then the DGC method \eqref{DGC1}-\eqref{DGC} also possesses a local truncation error of order $p$.
\end{theorem}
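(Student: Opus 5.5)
We work in the local truncation error setting. Assume the past values are exact, $y_{n-j}=y(t_{n-j})$ for $j=0,\dots,m$. The predictor \eqref{DGC1} then satisfies
\[
\overline{y}_{n+1}-y(t_{n+1})=\mathcal{O}(h^{p+1}).
\]
The goal is to show that the corrected value $y_{n+1}$ from \eqref{DGC} also satisfies $y_{n+1}-y(t_{n+1})=\mathcal{O}(h^{p+1})$. We split
\[
y_{n+1}-y(t_{n+1})=\bigl(\overline{y}_{n+1}-y(t_{n+1})\bigr)+\bigl(y_{n+1}-\overline{y}_{n+1}\bigr).
\]
The first term is already of the right order, so everything reduces to bounding the correction increment $y_{n+1}-\overline{y}_{n+1}$.

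By \eqref{DGC},
\[
\|y_{n+1}-\overline{y}_{n+1}\|=\frac{|H(y_0)-H(\overline{y}_{n+1})|}{\|\overline{\nabla}H(\overline{y}_{n+1},y_{n+1})\|}.
\]
We bound the numerator and denominator separately.

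For the numerator, the exact solution conserves $H$, so $H(y_0)=H(y(t_{n+1}))$. Applying the mean value theorem to $H$ on the segment between $y(t_{n+1})$ and $\overline{y}_{n+1}$ gives
\[
|H(y_0)-H(\overline{y}_{n+1})|\le \max\|\nabla H\|\,\|\overline{y}_{n+1}-y(t_{n+1})\|=\mathcal{O}(h^{p+1}).
\]
Here the maximum of $\|\nabla H\|$ is taken over a fixed compact neighbourhood of the exact trajectory. This neighbourhood contains $\overline{y}_{n+1}$ once $h$ is small, and $\nabla H$ is continuous because $H\in C^1$.

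For the denominator, we use the hypothesis of Theorem~\ref{T31}, namely $\|\overline{\nabla}H(\overline{y}_{n+1},v)\|>C$ for $v$ in a neighbourhood of $\overline{y}_{n+1}$. This requires that the fixed point $y_{n+1}$ actually lie in that neighbourhood. We take $y_{n+1}$ to be the solution constructed in the proof of Theorem~\ref{T31}, which lies in the ball $U$ of radius $C^{-1}|H(y_0)-H(\overline{y}_{n+1})|$. By the numerator estimate this radius is $\mathcal{O}(h^{p+1})$, so for small $h$ the ball $U$ sits inside the neighbourhood where the bound holds. Combining the two estimates gives
\[
\|y_{n+1}-\overline{y}_{n+1}\|\le C^{-1}\,\mathcal{O}(h^{p+1}),
\]
and therefore the local error of the DGC method is $\mathcal{O}(h^{p+1})$. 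This is local truncation error of order $p$, the same as for the predictor.

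The main obstacle is the uniformity of the constants. The lower bound $C$ and the neighbourhood in the hypothesis of Theorem~\ref{T31} must not shrink as $h\to0$. We would interpret the hypothesis as holding uniformly along the trajectory, which is natural if $\nabla H(y(t))\neq0$ on $[0,T]$, since the discrete gradient is continuous and $\overline{\nabla}H(y,y)=\nabla H(y)$. If several fixed points exist, we also have to restrict attention to the solution in $U$. Under the Lipschitz condition of Theorem~\ref{T32}, $U$ contains the unique fixed point, which removes the ambiguity.
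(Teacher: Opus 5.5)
Your proposal is correct and follows the paper's proof essentially verbatim. You split $y_{n+1}-y(t_{n+1})$ into the predictor error plus the correction increment, bound $|H(y_0)-H(\overline{y}_{n+1})|$ by the mean value theorem using $H(y_0)=H(y(t_{n+1}))$, and bound the denominator below by $C$. Your extra care (taking the fixed point in the ball $U$ so the lower bound $C$ actually applies, and requiring $C$ and the neighbourhood to be uniform in $h$) makes explicit assumptions that the paper's argument uses tacitly.
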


\begin{proof}
Since the local truncation error of the one-step/multi-step method \eqref{DGC1}  is $\mathcal{O}(h^{p+1})$, we obtain
\begin{align*}
    H(y_0) - H(\overline{y}_{n+1}) 
        &= H(y(t_{n+1})) - H(\overline{y}_{n+1}) \\
        &= (\overline{y}_{n+1} - y(t_{n+1})) \cdot \nabla H\left( \overline{y}_{n+1} + \xi(y(t_{n+1}) - \overline{y}_{n+1}) \right) \\
        &= \mathcal{O}(h^{p+1}),
\end{align*}
where $\xi \in (0,1)$. Consequently,
\begin{align*}
    \left\| y_{n+1} - y(t_{n+1}) \right\| 
        &\leq \left\| \overline{y}_{n+1} - y(t_{n+1}) \right\| + \left\| g(\overline{y}_{n+1}, y_{n+1}) - \overline{y}_{n+1} \right\| \\
        &\leq \mathcal{O}(h^{p+1}) + \frac{\left| H(y_0) - H(\overline{y}_{n+1}) \right|}{C} \\
        &= \mathcal{O}(h^{p+1}).
\end{align*}
This completes the proof.
\end{proof}

\begin{theorem}[Stability] 
Assume \eqref{DGC1} is a one-step method that has the form
\begin{align*}
\overline{y}_{n+1} = y_n + h \varphi_h(y_n),
\end{align*}
and define
\begin{align*}
\psi_h(y_n) = \varphi_h(y_n) \cdot \overline{\nabla}H(\overline{y}_{n+1}, y_n).
\end{align*}
If $\varphi_h, \psi_h$ are Lipschitz continuous with constants ${L_{\varphi}}$ and ${L_{\psi}}$, respectively, $\psi_h$ is uniformly bounded above by a positive constant $C_1$ and the discrete gradient $\overline{\nabla}H(x,y)$ is Lipschitz continuous in both arguments $x$ and $y$ with constant $L_0$, then the DGC method \eqref{DGC1}-\eqref{DGC} is initial-value stable under the assumptions of Theorem~\ref{T31}.
\end{theorem}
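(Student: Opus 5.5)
We aim for a Lipschitz-type estimate of the one-step DGC map. Let $y_n$ and $z_n$ be two numerical solutions started from perturbed initial data. Denote the predictors by $\overline{y}_{n+1}=y_n+h\varphi_h(y_n)$ and $\overline{z}_{n+1}=z_n+h\varphi_h(z_n)$, and the corrected values by $y_{n+1},z_{n+1}$. The goal is
\[
\|y_{n+1}-z_{n+1}\|\le (1+Kh)\|y_n-z_n\|,
\]
with $K$ independent of $h$ and $n$. A discrete Gronwall argument then gives $\|y_n-z_n\|\le e^{KT}\|y_0-z_0\|$, which is initial-value stability.

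\textbf{Predictor part.} It satisfies $\|\overline{y}_{n+1}-\overline{z}_{n+1}\|\le(1+hL_\varphi)\|y_n-z_n\|$.

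\textbf{Correction part.} First rewrite the scalar factor so that it carries an explicit $h$. By conservation of the previous step, $H(y_n)=H(y_0)$. The discrete chain rule of Definition~\ref{df1} then gives
\[
H(y_0)-H(\overline{y}_{n+1})=H(y_n)-H(\overline{y}_{n+1})=-\overline{\nabla}H(\overline{y}_{n+1},y_n)\cdot(\overline{y}_{n+1}-y_n)=-h\,\psi_h(y_n).
\]
This is exactly why $\psi_h$ appears in the hypotheses. The correction can therefore be written as
\[
y_{n+1}=\overline{y}_{n+1}-h\,\psi_h(y_n)\,\frac{\overline{\nabla}H(\overline{y}_{n+1},y_{n+1})}{\|\overline{\nabla}H(\overline{y}_{n+1},y_{n+1})\|^2},
\]
and similarly for $z$. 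The invariant value $H(y_0)$ is the same for both sequences; if the initial data are perturbed in $H$, we use $H(z_0)$ for $z$ with the same identity. Subtract the two formulas and split the difference of the correction terms in the standard way:
\[
(\psi_h(y_n)-\psi_h(z_n))\,F_y+\psi_h(z_n)\,(F_y-F_z),
\qquad F=\frac{\overline{\nabla}H}{\|\overline{\nabla}H\|^2}.
\]
The first piece is bounded by $L_\psi\|y_n-z_n\|/C$, using $\|F\|\le 1/C$ from the assumption of Theorem~\ref{T31}. For the second piece we use the map $u\mapsto u/\|u\|^2$, which has Lipschitz constant $1/C^2$ on $\{\|u\|\ge C\|\}$; this is the computation already done in the proof of Theorem~\ref{T32}. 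Combined with the Lipschitz continuity of $\overline{\nabla}H$ in both arguments,
\[
\|F_y-F_z\|\le \frac{L_0}{C^2}\big(\|\overline{y}_{n+1}-\overline{z}_{n+1}\|+\|y_{n+1}-z_{n+1}\|\big).
\]
With $|\psi_h|\le C_1$ we obtain
\[
\|y_{n+1}-z_{n+1}\|\le (1+hL_\varphi)e_n+\frac{hL_\psi}{C}e_n+\frac{hC_1L_0}{C^2}\big((1+hL_\varphi)e_n+\|y_{n+1}-z_{n+1}\|\big),
\qquad e_n=\|y_n-z_n\|.
\]

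\textbf{Main obstacle.} The difficulty is the implicit term $\|y_{n+1}-z_{n+1}\|$ on the right-hand side. Because it carries a factor $h$, we absorb it into the left-hand side whenever $hC_1L_0/C^2\le 1/2$. Dividing by $1-hC_1L_0/C^2$ yields $e_{n+1}\le(1+Kh)e_n$ with an explicit $K$ depending on $L_\varphi,L_\psi,L_0,C,C_1$.

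A secondary point is that $|\psi_h|$ must be bounded, not just bounded above; we interpret $C_1$ as bounding $|\psi_h|$. We also need the lower bound $\|\overline{\nabla}H\|>C$ to hold at the relevant pairs of points, which follows from the hypotheses of Theorem~\ref{T31} applied along both trajectories. With these, iterating and applying $(1+Kh)^n\le e^{KT}$ completes the argument.
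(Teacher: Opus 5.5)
Your proposal is correct and follows the paper's proof essentially step for step. The shared steps are:
\begin{enumerate}
\item rewriting the correction factor as $H(y_0)-H(\overline{y}_{n+1})=-h\psi_h(y_n)$ via conservation and the discrete chain rule;
\item splitting the difference of the correction terms into the same two pieces, bounded by $hL_\psi/C$ and $hC_1L_0/C^2$;
\item absorbing the implicit $\|y_{n+1}-z_{n+1}\|$ term once $hC_1L_0/C^2\le 1/2$;
\item closing with a discrete Gronwall bound $e^{KT}$.
\end{enumerate}
Your side remarks, that $C_1$ must bound $|\psi_h|$ and that the lower bound $C$ is needed along both trajectories, are assumptions the paper also uses implicitly without stating them.
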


\begin{proof}
Let $y(t)$ and $z(t)$ be exact solutions of \eqref{ode} with initial values $y_0$ and $z_0$, and $\{y_n\}$, $\{z_n\}$ their numerical solutions, respectively. Define
\begin{align*}
e_n = y_n - z_n, \quad \overline{e} _n = \overline{y}_n - \overline{z}_n.
\end{align*}
By the invariant-preserving property of \eqref{DGC}, we obtain
\begin{align*}
    H(y_0) - H(\overline{y}_{n+1}) 
        &= H(y_n) - H(\overline{y}_{n+1}) \\
        &= (y_n - \overline{y}_{n+1}) \cdot \overline{\nabla}H(\overline{y}_{n+1}, y_n) \\
        &= -h \varphi_h(y_n) \cdot \overline{\nabla}H(y_n + h\varphi_h(y_n), y_n) \\
        &= -h \psi_h(y_n),
\end{align*}
and
\begin{align*}
    H(z_0) - H(\overline{z}_{n+1}) =-h \psi_h(z_n).
\end{align*}
Thus, we have
\begin{align*}
    y_{n+1} &= \overline{y}_{n+1}-\frac{h \psi_h(y_n)}{\left\| \overline{\nabla}H(\overline{y}_{n+1}, y_{n+1}) \right\|^2} \overline{\nabla}H(\overline{y}_{n+1}, y_{n+1}), 
\end{align*}
and
\begin{align*}
    z_{n+1} &= \overline{z}_{n+1}-\frac{h \psi_h(z_n)}{\left\| \overline{\nabla}H(\overline{z}_{n+1}, z_{n+1}) \right\|^2} \overline{\nabla}H(\overline{z}_{n+1}, z_{n+1}).
\end{align*}
Consequently,
\begin{align}
    \left\| e_{n+1} \right\| 
        &\leq \left\| \overline{e}_{n+1} \right\| + h \left\| \frac{\psi_h(z_n) - \psi_h(y_n)}{\left\| \overline{\nabla}H(\overline{y}_{n+1}, y_{n+1}) \right\|^2} \overline{\nabla}H(\overline{y}_{n+1}, y_{n+1}) \right\| \nonumber \\
        &\quad + h \left\| \psi_h(z_n) \right\| \left\| \frac{\overline{\nabla}H(\overline{y}_{n+1}, y_{n+1})}{\left\| \overline{\nabla}H(\overline{y}_{n+1}, y_{n+1}) \right\|^2} - \frac{\overline{\nabla}H(\overline{z}_{n+1}, z_{n+1})}{\left\| \overline{\nabla}H(\overline{z}_{n+1}, z_{n+1}) \right\|^2} \right\| \nonumber \\
        &\leq \left\| \overline{e}_{n+1} \right\| + \frac{h}{C} \left\| \psi_h(z_n) - \psi_h(y_n) \right\| + \frac{h C_1}{C^2} \left\| \overline{\nabla}H(\overline{y}_{n+1}, y_{n+1}) - \overline{\nabla}H(\overline{z}_{n+1}, z_{n+1}) \right\| \nonumber \\
        &\leq \left\| \overline{e}_{n+1} \right\| + \frac{h {L_{\psi}}}{C} \left\| e_n \right\| + \frac{h C_1 L_0}{C^2} \left( \left\| \overline{e}_{n+1} \right\| + \left\| e_{n+1} \right\| \right). \label{s1}
\end{align}
Noticing that
\begin{align}
    \left\| \overline{e}_{n+1} \right\| 
        \leq \left\| e_n \right\| + h \left\| \varphi_h(y_n) - \varphi_h(z_n) \right\|
        \leq (1 + h {L_{\varphi}}) \left\| e_n \right\|, \label{s2}
\end{align}
and substituting \eqref{s2} into \eqref{s1} yields
\begin{align*}
\left( 1 - \frac{h C_1 L_0}{C^2} \right) \left\| e_{n+1} \right\| \leq \left[ \frac{h {L_{\psi}}}{C} + \left( 1 + \frac{h C_1 L_0}{C^2} \right) (1 + h {L_{\varphi}}) \right] \left\| e_n \right\|.\label{s2b}
\end{align*}
As a result, when $h < h_0 := \min \left\{ 1, \frac{C^2}{2 C_1 L_0} \right\}$, we have

\begin{align*}
    \left\| e_{n} \right\| 
        &\leq \frac{1 + h C_3}{1 - h C_2} \left\| e_{n-1} \right\| 
        \leq \left( 1 + h (2 C_2 + 2 C_3) \right) \left\| e_{n-1}\right\|\leq e^{T C_4} \left\| e_0 \right\|,
\end{align*}
where
\[
C_2 = \frac{C_1 L_0}{C^2}, ~C_3 = \frac{{L_{\psi}}}{C} + \frac{C_1 L_0}{C^2} + {L_{\varphi}} + \frac{C_1 L_0 {L_{\varphi}}}{C^2},~C_4 = 2 C_2 + 2 C_3.
\]
That is,
\begin{align*}
    \left\| y_n-z_n \right\| 
        &\leq (1 + h C_4) \left\| e_{n-1} \right\|
        \leq e^{T C_4} \left\| y_0-z_0 \right\|.
\end{align*}
It means that the DGC method \eqref{DGC1}-\eqref{DGC} is initial-value stable. This completes the proof.
\end{proof}

\section{Multi-Invariant Generalization}\label{sec4}

The DGC method can be naturally extended to conservative systems \eqref{ode} possessing $k$ invariants:
\[
H_i(y(t)) \equiv H_i(y_0), \quad i = 1, 2, \cdots, k, \quad t \in [0, T].
\]
The corresponding DGC method for multiple invariant preservation is given by
\begin{align}\label{DGCM}
    \begin{cases}
        \overline{y}_{n+1} = \phi_h(y_n, y_{n-1}, \cdots, y_{n-m}), \\
        y_{n+1} = g(\overline{y}_{n+1}, y_{n+1}),
    \end{cases}
\end{align}
where the correction function $g$ is defined as
\begin{align*}
    g(\overline{y}_{n+1}, y_{n+1}) = \overline{y}_{n+1} + \sum_{i=1}^{k} \lambda_i \, \overline{\nabla}H_i(\overline{y}_{n+1}, y_{n+1}),
\end{align*}
with the coefficient vector $\boldsymbol{\lambda} = (\lambda_1, \dots, \lambda_k)^\top$ obtained by solving the linear system
\begin{align}\label{DGCMb}
    A \boldsymbol{\lambda} = \mathbf{b}.
\end{align}
Here, $A$ is the symmetric positive semi-definite matrix
\begin{align*}
    A = \left( \overline{\nabla}H_i(\overline{y}_{n+1}, y_{n+1}) \cdot \overline{\nabla}H_j(\overline{y}_{n+1}, y_{n+1}) \right)_{k \times k},
\end{align*}
and $\mathbf{b}$ is the vector
\begin{align*}
    \mathbf{b} = \left( H_1(y_0) - H_1(\overline{y}_{n+1}), \dots, H_k(y_0) - H_k(\overline{y}_{n+1}) \right)^\top.
\end{align*}

\begin{theorem}[Existence]
If all eigenvalues of the matrix $A$ are positive and uniformly bounded below by a positive constant $C$, then the DGC method \eqref{DGCM} admits at least one solution.
\end{theorem}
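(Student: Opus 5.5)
Following the proof of Theorem~\ref{T31}, we rewrite the implicit correction as a fixed-point problem and apply Brouwer's fixed point theorem (Lemma~\ref{BF}). For fixed $\overline{y}_{n+1}$ and any $v$, set $\mathbf{d}_i(v)=\overline{\nabla}H_i(\overline{y}_{n+1},v)$. Let $D(v)\in\mathbb{R}^{d\times k}$ be the matrix with columns $\mathbf{d}_i(v)$, and put $A(v)=D(v)^\top D(v)$. We read the hypothesis as in Theorem~\ref{T31}: the smallest eigenvalue of $A(v)$ is at least $C>0$ for all $v$ in the relevant neighbourhood of $\overline{y}_{n+1}$ (in particular on the set $U$ below). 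Then $A(v)$ is invertible, $\boldsymbol{\lambda}(v)=A(v)^{-1}\mathbf{b}$ is well defined, and we define
\[
g(\overline{y}_{n+1},v)=\overline{y}_{n+1}+D(v)A(v)^{-1}\mathbf{b},
\]
where $\mathbf{b}$ depends only on $\overline{y}_{n+1}$ and not on $v$. A fixed point of $g(\overline{y}_{n+1},\cdot)$ is exactly a solution of \eqref{DGCM}.

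The key step is a bound on $\|g(\overline{y}_{n+1},v)-\overline{y}_{n+1}\|$ that does not depend on $v$. The tempting estimate $\|D\|\,\|A^{-1}\|\,\|\mathbf{b}\|$ needs an upper bound on $\|D\|$, which is not among the hypotheses. We avoid it with the identity
\[
\|DA^{-1}\mathbf{b}\|^2=\mathbf{b}^\top A^{-1}D^\top D A^{-1}\mathbf{b}=\mathbf{b}^\top A^{-1}\mathbf{b}\le \frac{\|\mathbf{b}\|^2}{C},
\]
where the last inequality uses that $A$ is symmetric with all eigenvalues at least $C$. Hence
\[
\|g(\overline{y}_{n+1},v)-\overline{y}_{n+1}\|\le \frac{\|\mathbf{b}\|}{\sqrt{C}}.
\]
This generalises the scalar bound $|H(y_0)-H(\overline{y}_{n+1})|/C$ from Theorem~\ref{T31}; there the constant bounds the norm, here it bounds the squared singular values. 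We therefore take
\[
U=\left\{v:\ \|v-\overline{y}_{n+1}\|\le \|\mathbf{b}\|/\sqrt{C}\right\},
\]
a closed ball in $\mathbb{R}^d$, hence compact and convex, and the bound shows $g(\overline{y}_{n+1},\cdot)$ maps $U$ into $U$.

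Continuity is routine. Each $\mathbf{d}_i(v)$ is continuous in $v$ by Definition~\ref{df1}, so $A(v)$ is continuous. Matrix inversion is continuous on invertible matrices, and $A(v)$ stays invertible on $U$ by the eigenvalue bound. So $v\mapsto D(v)A(v)^{-1}\mathbf{b}$ is continuous on $U$, and Lemma~\ref{BF} gives a fixed point $y_{n+1}\in U$, i.e.\ a solution of \eqref{DGCM}.

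The main obstacle is the self-mapping bound. The plan is to use the Gram identity above rather than an operator-norm product, so that no upper bound on the discrete gradients is required. A secondary, interpretive point is that the eigenvalue hypothesis must hold uniformly for all candidate $v\in U$, not only at the unknown solution; this is the same convention as in Theorem~\ref{T31}.
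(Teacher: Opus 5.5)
Your proposal is correct and follows the paper's proof essentially step for step: the paper also uses the Gram identity $\|g(\overline{y}_{n+1},v)-\overline{y}_{n+1}\|^2=\boldsymbol{\lambda}^\top A\boldsymbol{\lambda}=\mathbf{b}^\top A^{-1}\mathbf{b}\le C^{-1}\|\mathbf{b}\|^2$, takes the closed ball of radius $C^{-1/2}\|\mathbf{b}\|$, and applies Brouwer's theorem after noting that $A^{-1}$, and hence $g(\overline{y}_{n+1},\cdot)$, is continuous. Your remark that the eigenvalue bound must hold for every $v\in U$, not only at the unknown solution, makes explicit what the paper leaves implicit in the word ``uniformly.''
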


\begin{proof}
The continuity of the discrete gradient implies that $A(\overline{y}_{n+1},v)$ is continuous in $v$. Given that \(A\) is invertible, \(A^{-1}\) is also continuous. Consequently, $\lambda = A^{-1}b(\overline{y}_{n+1})$ depends continuously on $v$, and so does $g(\overline{y}_{n+1},v)$. 

From the definition of $g$, we have
\begin{align*}
    \left\| g(\overline{y}_{n+1},v) - \overline{y}_{n+1} \right\|^2 = \boldsymbol{\lambda}^\top A \boldsymbol{\lambda}.
\end{align*}
Substituting $\boldsymbol{\lambda} = A^{-1} b(\overline{y}_{n+1})$ yields
\begin{align*}
    \left\| g(\overline{y}_{n+1},v) - \overline{y}_{n+1} \right\|^2 = 
    \mathbf{b}^\top (\overline{y}_{n+1}) A^{-1} \mathbf{b}(\overline{y}_{n+1}).
\end{align*}
Since $A$ is symmetric positive definite with all eigenvalues greater than $C > 0$, the largest eigenvalue of $A^{-1}$ is bounded by $C^{-1}$. By the Rayleigh quotient principle:
\begin{align*}
    \mathbf{b}(\overline{y}_{n+1})^\top A^{-1} \mathbf{b}(\overline{y}_{n+1})
    \leq C^{-1} \| \mathbf{b}(\overline{y}_{n+1}) \|^2.
\end{align*}
Therefore,
\begin{align*}
    \left\| g(\overline{y}_{n+1},v) - \overline{y}_{n+1} \right\| \leq 
    C^{-1/2} \|\mathbf{b}(\overline{y}_{n+1}) \|.
\end{align*}
Now define
\[
U = \left\{ v \middle| \|v - \overline{y}_{n+1}\| \leq C^{-1/2} \|\mathbf{b}(\overline{y}_{n+1})\| \right\}.
\]
The above inequality shows that $g(\overline{y}_{n+1}, \cdot)$ maps $U$ into itself. As $U$ is a closed ball (hence compact and convex), by Lemma~\ref{BF} (Brouwer fixed-point theorem), there exists $v^* \in U$ such that $g(\overline{y}_{n+1}, v^*) = v^*$, which provides a solution to the DGC method \eqref{DGCM}. This completes the proof.
\end{proof}

\begin{theorem}[Consistency]
Assume that all eigenvalues of the matrix $A$ are positive and uniformly bounded below by a positive constant $C$. If the prediction method $\phi_h$ has a local truncation error of order $p$, then the DGC method \eqref{DGCM} also achieves a local truncation error of order $p$.
\end{theorem}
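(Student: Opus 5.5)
We follow the single-invariant consistency argument, replacing the scalar bound by the spectral bound from the multi-invariant existence theorem. Assume the exact values $y_n=y(t_n),\dots,y_{n-m}=y(t_{n-m})$ are used as starting data, as in the definition of local truncation error. The predictor then satisfies $\|\overline{y}_{n+1}-y(t_{n+1})\|=\mathcal{O}(h^{p+1})$.

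First, we bound the right-hand side $\mathbf{b}$ componentwise. Because each $H_i$ is conserved along the exact flow, $H_i(y_0)=H_i(y(t_{n+1}))$. The mean value theorem then gives
\begin{align*}
H_i(y_0)-H_i(\overline{y}_{n+1}) = \bigl(y(t_{n+1})-\overline{y}_{n+1}\bigr)\cdot\nabla H_i\bigl(\overline{y}_{n+1}+\xi_i(y(t_{n+1})-\overline{y}_{n+1})\bigr)
\end{align*}
for some $\xi_i\in(0,1)$. The gradients are evaluated on a compact neighbourhood of the exact trajectory, so they are bounded there. Hence each component is $\mathcal{O}(h^{p+1})$, and consequently $\|\mathbf{b}(\overline{y}_{n+1})\|=\mathcal{O}(h^{p+1})$, with a constant depending only on $k$ and on bounds for $\nabla H_i$.

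Second, we bound the size of the correction. For any solution $y_{n+1}=g(\overline{y}_{n+1},y_{n+1})$, the computation in the proof of the preceding existence theorem applies with $v=y_{n+1}$. Using $\boldsymbol{\lambda}=A^{-1}\mathbf{b}$ and $\lambda_{\min}(A)\ge C$, it gives
\begin{align*}
\|y_{n+1}-\overline{y}_{n+1}\|^2=\boldsymbol{\lambda}^\top A\boldsymbol{\lambda}=\mathbf{b}^\top A^{-1}\mathbf{b}\le C^{-1}\|\mathbf{b}\|^2 .
\end{align*}
The triangle inequality then yields
\begin{align*}
\|y_{n+1}-y(t_{n+1})\|\le\|\overline{y}_{n+1}-y(t_{n+1})\|+C^{-1/2}\|\mathbf{b}\|=\mathcal{O}(h^{p+1}),
\end{align*}
which is the claimed order $p$.

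The main obstacle is the first step. There we must justify that the predictor and the segment to $y(t_{n+1})$ stay in a fixed compact set, so that the constants are uniform in $h$. This holds because $\overline{y}_{n+1}\to y(t_{n+1})$ and the trajectory is compact on $[0,T]$, so a fixed neighbourhood suffices for small $h$. The hypothesis on $A$ is used only as a uniform lower bound on its eigenvalues along the fixed point. No uniqueness is needed, since the estimate holds for every solution.
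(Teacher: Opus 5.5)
Your proposal is correct and follows the paper's proof essentially line by line: you combine the triangle inequality with the bound $\|g(\overline{y}_{n+1},y_{n+1})-\overline{y}_{n+1}\|^2=\mathbf{b}^\top A^{-1}\mathbf{b}\le C^{-1}\|\mathbf{b}\|^2$ from the existence theorem. The only difference is that you justify $\|\mathbf{b}(\overline{y}_{n+1})\|=\mathcal{O}(h^{p+1})$ explicitly, via the mean value theorem and a compactness argument for uniform constants, whereas the paper simply asserts it, as in its single-invariant consistency proof.
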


\begin{proof}
If $\phi_h$ has a local truncation error of order $p$, then
\[
\|\overline{y}_{n+1} - y(t_{n+1})\| = \mathcal{O}(h^{p+1}), \quad \|\mathbf{b}(\overline{y}_{n+1})\| = \mathcal{O}(h^{p+1}).
\]
Consequently,
\begin{align*}
    \|y_{n+1} - y(t_{n+1})\| & \leq \|\overline{y}_{n+1} - y(t_{n+1})\| + \|g(\overline{y}_{n+1}, y_{n+1}) - \overline{y}_{n+1}\| \\
    & \leq \|\overline{y}_{n+1} - y(t_{n+1})\| + C^{-1/2} \|\mathbf{b}(\overline{y}_{n+1})\| \\
    & = \mathcal{O}(h^{p+1}).
\end{align*}
This establishes the desired result.
\end{proof}

\begin{theorem}[Conservation]
The DGC method \eqref{DGCM} preserves all invariants $H_1, H_2, \dots, H_k$ exactly, i.e., $H_i(y_n) = H_i(y_0)$ for all $n$ and $i = 1, \dots, k$.
\end{theorem}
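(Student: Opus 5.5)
The plan mirrors the single-invariant conservation proof, applied componentwise and combined with the linear system \eqref{DGCMb}. Fix $n$ and let $y_{n+1}$ be a solution of \eqref{DGCM}; one exists by the preceding existence theorem under its eigenvalue assumption. Abbreviate $\mathbf{g}_i := \overline{\nabla}H_i(\overline{y}_{n+1}, y_{n+1})$, so that the correction reads
\[
y_{n+1} - \overline{y}_{n+1} = \sum_{j=1}^{k} \lambda_j \mathbf{g}_j ,
\]
with $\boldsymbol{\lambda}$ solving $A\boldsymbol{\lambda} = \mathbf{b}$ and $A_{ij} = \mathbf{g}_i\cdot\mathbf{g}_j$, all evaluated at the fixed point $v = y_{n+1}$.

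For each $i$, I will take the inner product of this identity with $\mathbf{g}_i$. On the left, the discrete chain rule of Definition~\ref{df1}, applied with $x=\overline{y}_{n+1}$ and $y=y_{n+1}$, gives
\[
\mathbf{g}_i\cdot(y_{n+1}-\overline{y}_{n+1}) = H_i(y_{n+1}) - H_i(\overline{y}_{n+1}).
\]
On the right, the sum is $\sum_j (\mathbf{g}_i\cdot\mathbf{g}_j)\lambda_j = (A\boldsymbol{\lambda})_i$. By \eqref{DGCMb} this equals $b_i = H_i(y_0) - H_i(\overline{y}_{n+1})$. Equating the two sides and cancelling $H_i(\overline{y}_{n+1})$ yields $H_i(y_{n+1}) = H_i(y_0)$ for every $i=1,\dots,k$.

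Conservation for all $n$ then follows by induction on $n$, since each step individually reproduces $H_i(y_0)$ regardless of the predictor. The starting values $y_1,\dots,y_m$ needed by the multi-step predictor are assumed to be computed by the same correction, or to be exact.

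The only delicate point is that $A$ and $\mathbf{b}$ must be evaluated at the same $v=y_{n+1}$ used in the correction, which holds by the fixed-point definition of $g$. The argument also requires $\boldsymbol{\lambda}$ to exist, i.e.\ $A\boldsymbol{\lambda}=\mathbf{b}$ must be solvable. I will therefore state that we work with a solution produced under the hypothesis of the existence theorem, where $A$ is invertible. Beyond that, the argument is purely algebraic.
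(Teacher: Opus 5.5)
Your proof is correct and matches the paper's argument: take the inner product of the correction identity with each $\overline{\nabla}H_j(\overline{y}_{n+1},y_{n+1})$, apply the discrete chain rule on the left, and use $A\boldsymbol{\lambda}=\mathbf{b}$ on the right. Your remarks on the solvability of the linear system and on the starting values of a multi-step predictor are sensible clarifications that the paper leaves implicit.
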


\begin{proof}
Taking the inner product of both sides of the correction scheme
\[
y_{n+1} - \overline{y}_{n+1} = \sum_{i=1}^{k} \lambda_i \overline{\nabla}H_i(\overline{y}_{n+1}, y_{n+1})
\]
with $\overline{\nabla}H_j(\overline{y}_{n+1}, y_{n+1})$ for each $j$, we obtain
\[
\overline{\nabla}H_j(\overline{y}_{n+1}, y_{n+1}) \cdot (y_{n+1} - \overline{y}_{n+1}) = \sum_{i=1}^{k} \lambda_i \overline{\nabla}H_j(\overline{y}_{n+1}, y_{n+1}) \cdot \overline{\nabla}H_i(\overline{y}_{n+1}, y_{n+1}).
\]
By the fundamental property of the discrete gradient yields
\begin{align}\label{4.31}
    H_j(y_{n+1}) - H_j(\overline{y}_{n+1}) = \sum_{i=1}^{k} \lambda_i \overline{\nabla}H_j(\overline{y}_{n+1}, y_{n+1}) \cdot \overline{\nabla}H_i(\overline{y}_{n+1}, y_{n+1}).
\end{align}
On the other hand, the linear system $A\boldsymbol{\lambda} = \mathbf{b}$ implies
\begin{align}\label{4.32}
    \sum_{i=1}^{k} \lambda_i \overline{\nabla}H_j(\overline{y}_{n+1}, y_{n+1}) \cdot \overline{\nabla}H_i(\overline{y}_{n+1}, y_{n+1}) = H_j(y_0) - H_j(\overline{y}_{n+1}).
\end{align}
Substituting \eqref{4.32} into \eqref{4.31} gives
\begin{align*}
    H_j(y_{n+1}) - H_j(\overline{y}_{n+1}) = H_j(y_0) - H_j(\overline{y}_{n+1}),
\end{align*}
which simplifies to 
\begin{align*}
H_j(y_{n+1}) = H_j(y_0).
\end{align*}
The arbitrariness of $j$ implies that the DGC method \eqref{DGCM} can preserve all invariants simultaneously. This completes the proof.
\end{proof}

Algorithm~\ref{algo1} outlines the DGC method \eqref{DGCM} designed to preserve 
$k$ invariants simultaneously, with its applicability naturally extending to the single-invariant case (i.e., $k=1$).
\begin{algorithm}
\caption{DGC method \eqref{DGCM} for preserving multiple invariants}\label{algo1}
\begin{algorithmic}[1]
\State \textbf{for} $n = 0$ \textbf{to} $N-1$ \textbf{do}
\State ~~$\overline{y}_{n+1} \Leftarrow \phi_h(y_n, y_{n-1}, \cdots, y_{n-m})$ \hfill \textit{// prediction step}
\State ~~${y}_{n+1} \Leftarrow \overline{y}_{n+1}$ \hfill \textit{// initial guess}
\State ~~\textbf{repeat}
\State ~~~~Compute $A(\overline{y}_{n+1}, {y}_{n+1})$ and $\mathbf{b}(\overline{y}_{n+1})$
\State ~~~~Solve $\boldsymbol{\lambda} \Leftarrow A^{-1} \mathbf{b}(\overline{y}_{n+1})$
\State ~~~~Update ${y}_{n+1} \Leftarrow g(\overline{y}_{n+1}, {y}_{n+1})$
\State ~~\textbf{until} $\left\| {y}_{n+1} - g(\overline{y}_{n+1}, {y}_{n+1}) \right\| \leq \text{Tolerance}$
\State \textbf{end for}
\end{algorithmic}
\end{algorithm}

\section{Numerical experiments}\label{sec5}
This section evaluates the computational efficacy of the proposed DGC method using representative numerical examples. In practice, we solve the DGC schemes \eqref{DGC1}-\eqref{DGC} and \eqref{DGCM} via fixed-point iteration. The methods used in the prediction step include the forward Euler method and the Runge–Kutta (RK) methods listed in Tables~\ref{tab1} and~\ref{tab2}.

\begin{minipage}{\textwidth}
    \centering
    \begin{minipage}{0.45\textwidth}
        \centering
        \captionof{table}{Third-order
 method $(A,b^1)$ with a
 second-order embedded method
 $(A,b^2)$.}\label{tab1}
        \begin{tabular}{@{}l|lll@{}}
            0 & & & \\
            $1/2$ & $1/2$ & & \\
            1 & -1 & 2 & \\
            \hline
           $b^1$ & $1/6$ & $2/3$ & $1/6$\\
            \hline
            $b^2$&$1/2$&$0$&$1/2$
        \end{tabular}
    \end{minipage}
    \hfill
    \begin{minipage}{0.45\textwidth}
        \centering
        \captionof{table}{Fourth-order
 method $(A,b^1)$ with a
 second-order embedded method
 $(A,b^2)$.}\label{tab2}
        \begin{tabular}{@{}l|llll@{}}
            0 & & & & \\
            $1/2$ & $1/2$ & & & \\
            $1/2$ & 0 & $1/2$ & & \\
            1 & 0 & 0 & 1 & \\
            \hline
           $b^1$ & $1/6$ & $1/3$ & $1/3$ & $1/6$\\
           \hline
           $b^2$ & $1/4$ & $1/4$ & $1/4$ & $1/4$
        \end{tabular}
    \end{minipage}
\end{minipage}

\begin{example}\label{ex:1}
    Consider the Lotka-Volterra model
    \begin{align}\label{LV}
        \begin{cases}
            y'_1 = y_1 (y_2 - 2), \\
            y'_2 = y_2 (1 - y_1),
        \end{cases}
    \end{align}
    which describes the population dynamics of a predator-prey system, where $y_1$ and $y_2$ denote the populations of the predator and prey species, respectively.
    This model possesses a first integral
    \begin{align*}
        H(y_1, y_2) = \ln y_1 - y_1 + 2 \ln y_2 - y_2,
    \end{align*}
    implying that the solution trajectories form closed curves.
\end{example}

We consider the initial value \( \mathbf{y}_0 = (2, 2)^{\top} \) and integrate until \( T = 100 \). Model~\eqref{LV} is solved using the following three methods:
\begin{itemize}
    \item[$\cdot$] Standard forward Euler method.
    \item[$\cdot$] Projected forward Euler method.
    \item[$\cdot$] Discrete gradient correction method (DGC1), which combines the forward Euler method with the coordinate increment discrete gradient~\eqref{DG3}.
\end{itemize}

The numerical results are presented in Table~\ref{tab3}, while the solution trajectories and invariant preservation are depicted in Figs.~\ref{fi1} and~\ref{fi2}.  We observe that:
\begin{itemize}
    \item[$\cdot$] Under relatively large step sizes, the standard forward Euler method exhibits numerical instability, while the DGC1 scheme maintains the theoretical convergence order;
    \item[$\cdot$] At \( h = 2/3 \), Newton's method used in the projected method shows high sensitivity to the initial guess, often leading to convergence failure. In contrast, at the same step size, DGC1 preserves the invariant up to machine precision (see Fig.~\ref{fi2}) and yields the correct solution trajectory;
    \item[$\cdot$] The update direction of the relaxed forward Euler method is tangent to the invariant manifold, resulting in the relaxation equation admitting only the trivial solution $\lambda=0$. As a consequence, the numerical solution remains fixed at the initial state $y_n \equiv y_0$ for all $n$, failing to reproduce the true periodic dynamics.
\end{itemize}

\begin{table}[htbp]
    \centering
    \setlength{\tabcolsep}{6mm}
    \caption{Numerical results for the Lotka-Volterra model.}\label{tab3}
    \begin{threeparttable}
    \begin{tabular}{@{}lllllll@{}}
        \toprule
        \multirow{2}{*}{$h$} & \multicolumn{2}{c}{Euler} & \multicolumn{2}{c}{DGC1} & \multicolumn{2}{c}{Projected} \\
        \cmidrule(lr){2-3}\cmidrule(lr){4-5}\cmidrule(lr){6-7}
        & $L^{\infty}$ Error & Rate & $L^{\infty}$ Error & Rate & $L^{\infty}$ Error & Rate \\
        \midrule
        $2/3$   & *    &    & 2.2255 &   & *   &    \\
        $1/10$  & *    & **    & 1.4102 & 0.6582    & 1.4106  &**    \\
        $1/20$  & *    & **    & 0.4068 & 1.7935 & 0.4064 & 1.7953 \\
        $1/40$  & 21.0124 & **    & 0.1054 & 1.9484 & 0.1053 & 1.9484 \\
        $1/80$  & 5.4956  & 1.9349   & 0.0277 & 1.9279 & 0.0277 & 1.9265 \\
        \bottomrule
    \end{tabular}
     \begin{tablenotes}
\item * Solution failed.
\item ** No convergence rate computed due to preceding failure.
\end{tablenotes}
\end{threeparttable}
\end{table}

\begin{figure}[htbp]
    \centering      \includegraphics[width=0.5\textwidth]{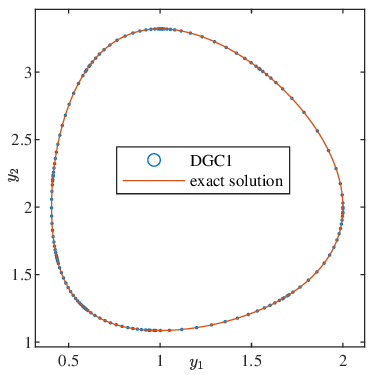}
    \caption{ Lotka-Volterra Model: Exact and Computed (DGC1) Solutions}\label{fi1}
\end{figure}

\begin{figure}[htbp]
    \centering
    \includegraphics{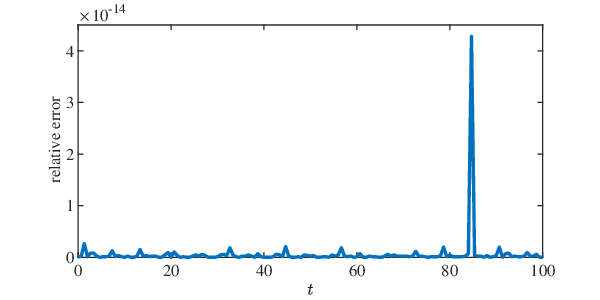}
    \caption{Relative error in the invariant $H$ for the DGC1 method.}\label{fi2}
\end{figure}

\begin{example}\label{ex:2}
    Consider the sine-Gordon equation
    \begin{align}\label{eq:sg}
        \begin{cases}
            u_{tt} - u_{xx} + \sin u = 0, & (x,t) \in [-L, L] \times [0, T], \\
            u(x, 0) = 0, \\
            u_t(x, 0) = 4\kappa\,\mathrm{sech}(\kappa x), \\
            u(L, t) = u(-L, t),
        \end{cases}
    \end{align}
    where $\kappa = \frac{1}{\sqrt{1 + c^2}}$. This system possesses the Hamiltonian
    \[
        H(u(t)) = \frac{1}{2} \int^L_{-L} |u_t|^2 + |u_x|^2 + 2(1 - \cos(u)) \, \mathrm{d}x.
    \]
    The exact solution \cite{Bratsos} is given by
    \[
        u(x, t) = 4 \arctan\left(c^{-1} \sin(c\kappa t) \, \mathrm{sech}(\kappa x)\right).
    \]
    In the following computations, we use the parameters $c = 0.5$ and $L = 20$.
\end{example}

Introducing $v = u_t$, we reformulate the sine-Gordon equation \eqref{eq:sg} as a first-order system:
\begin{align*}
    \begin{cases}
        u_t = v, \\
        v_t = u_{xx} - \sin u.
    \end{cases}
\end{align*}
We discretize space uniformly with $h = 2L/N$ and grid points $x_i = -L + (i-1)h$. Define the solution vectors
\[
    U(t) = [u_1, u_2, \cdots, u_N]^T, \quad V(t) = [v_1, v_2, \cdots, v_N]^T,
\]
where $u_i = u(-L + (i-1)h, t)$ and $v_i = v(-L + (i-1)h, t)$.

We approximate $u_{xx}$ using the spectral differentiation operator
\[
    \mathcal{D} = \mathcal{F}^H \Lambda \mathcal{F},
\]
where $\mathcal{F}$ is the discrete Fourier matrix, $\mathcal{F}^H$ its conjugate transpose, and
\[
    \Lambda = -\left(\frac{2\pi}{2L}\right)^2 \mathrm{diag}\left[0^2, 1^2, \cdots, \left(\frac{N}{2}\right)^2, \left(-\frac{N}{2}+1\right)^2, \cdots, (-2)^2, (-1)^2\right].
\]
For further details on spectral methods, see \cite{ShenJ}. The resulting semi-discrete system is
\begin{align}\label{sg1}
    \begin{cases}
        U_t(t) = V(t), \\
        V_t(t) = \mathcal{D} U(t) - \sin(U(t)).
    \end{cases}
\end{align}
The semi-discrete Hamiltonian is given by
\[
    H(U(t), V(t)) = \frac{h}{2} \left( V(t)^T V(t) - U(t)^T \mathcal{D} U(t) + 2\mathbf{e}^T (\mathbf{e} - \cos(U(t))) \right),
\]
where $\mathbf{e} = [1, 1, \cdots, 1]^T \in \mathbb{R}^N$.

We solve system~\eqref{sg1} with $N = 128$ up to $T = 100$ using the following methods:
\begin{itemize}
    \item[$\cdot$] Discrete gradient correction method combined with the coordinate increment discrete gradient~\eqref{DG3} (DGC2).
    \item[$\cdot$] Projected method.
    \item[$\cdot$] Relaxed Runge-Kutta method (RRK).
    \item[$\cdot$] Average vector field (AVF) method \cite{Celledoni}.
    \item[$\cdot$] Gonzalez discrete gradient method.
\end{itemize}
Here, the DGC, projected, and RRK methods are implemented using the Runge-Kutta schemes defined by $(A,b^1)$ listed in Table~\ref{tab1}.

Table~\ref{tab4} demonstrates that over long-time computations, the DGC2 scheme not only achieves the theoretical order of convergence but also preserves energy exactly. Furthermore, Fig.~\ref{fi3} further illustrates the superior computational efficiency of DGC2 compared to other methods. Fig.~\ref{fi4} shows that during long-time simulation, the DGC2 method successfully captures the breather solution of the sine-Gordon equation, which highlights its reliability in simulating localized nonlinear wave phenomena.

\begin{table}[htbp]
    \centering
    \setlength{\tabcolsep}{8mm}
    \caption{Numerical results for the semi-discrete system~\eqref{sg1} for $T=100$.}\label{tab4}
    \begin{tabular}{@{}cccc@{}}
        \toprule
        $\tau$ & $L^{\infty}$ Error & Rate & Relative Error ($H$) \\
        \midrule
        1/10  & 0.0010      & -    & 1.9600e-15 \\
        1/20  & 7.6908e-05  & 3.7007 & 1.9600e-15 \\
        1/40  & 9.5762e-06  & 3.0056 & 1.9600e-15 \\
        1/80  & 1.2032e-06  & 2.9926 & 8.4000e-16 \\
        \bottomrule
    \end{tabular}
\end{table}

\begin{figure}[htbp]
    \centering
    \includegraphics{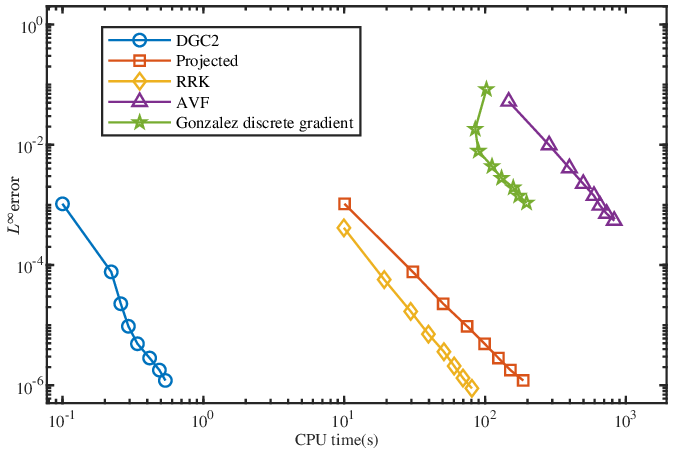}
    \caption{Efficiency curves: $L^{\infty}$ norm errors versus CPU time.}\label{fi3}
\end{figure}

\begin{figure}[htbp]
    \centering
    \begin{subfigure}{0.46\textwidth}
        \centering
        \includegraphics[width=\linewidth]{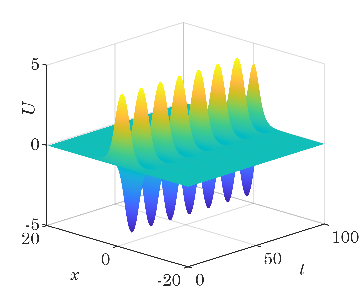}
        \caption{Numerical solution by DGC2 ($\tau = 1/10$).}\label{fi4a}
    \end{subfigure}
    \hfill
    \begin{subfigure}{0.46\textwidth}
        \centering
        \includegraphics[width=\linewidth]{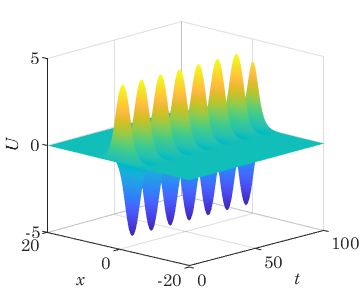}
        \caption{Exact solution ($\tau = 1/10$).}\label{fi4b}
    \end{subfigure}
    \caption{Comparison of numerical and exact solutions for the sine-Gordon equation.}\label{fi4}
\end{figure}
\begin{example}\label{ex:3}
Consider the Euler equations for rigid body dynamics:
\begin{align}\label{exa:3}
    \begin{cases}
        y'_1 = \dfrac{I_2 - I_3}{I_2 I_3} y_2 y_3, \\
        y'_2 = \dfrac{I_3 - I_1}{I_3 I_1} y_3 y_1, \\
        y'_3 = \dfrac{I_1 - I_2}{I_1 I_2} y_1 y_2,
    \end{cases}
\end{align}
where $y = (y_1, y_2, y_3)^\top$ represents the angular momentum vector and $I_i$ are the principal moments of inertia. This system possesses two invariants:
\begin{align*}
    H_1(y_1, y_2, y_3) = \frac{1}{2} \left( \frac{y^2_1}{I_1} + \frac{y^2_2}{I_2} + \frac{y^2_3}{I_3} \right),
\end{align*}
and
\begin{align*}
    H_2(y_1, y_2, y_3) = y^2_1 + y^2_2 + y^2_3.
\end{align*}
\end{example}

We take $I_1 = 2$, $I_2 = 1$, $I_3 = 2/3$, and initial value $y_0 = (\cos(1.1), 0, \sin(1.1))^\top$. The corresponding exact solution can be expressed in terms of Jacobi elliptic functions as:
\begin{align*}
    \begin{cases}
        y_1(t) = \cos(1.1) \, \mathrm{cn} \left( \dfrac{\sin(1.1)}{\sqrt{2}} t, \cot(1.1) \right), \\
        y_2(t) = -\sqrt{2} \cos(1.1) \, \mathrm{sn} \left( \dfrac{\sin(1.1)}{\sqrt{2}} t, \cot(1.1) \right), \\
        y_3(t) = \sin(1.1) \, \mathrm{dn} \left( \dfrac{\sin(1.1)}{\sqrt{2}} t, \cot(1.1) \right).
    \end{cases}
\end{align*}

Setting $T = 1000$, we solve the Euler equations~\eqref{exa:3} using the discrete gradient correction method (DGC3), which combines the Runge-Kutta method with coefficients $(A,b^1)$ from Table~\ref{tab1} with the coordinate increment discrete gradient~\eqref{DG3}. As shown in Table~\ref{tab5}, the DGC3 method preserves both $H_1$ and $H_2$ to machine precision, achieves the expected order of accuracy, and requires only a few iterations per time step to converge. 

Fig.~\ref{fi5} illustrates the numerical trajectories obtained using different conservation strategies with the DGC3 method at $h = 1$, revealing the following observations:
\begin{itemize}
    \item[$\cdot$] Preserving only a single invariant (either $H_1$ or $H_2$) fails to maintain the correct solution trajectory.
    \item[$\cdot$] Simultaneous conservation of both $H_1$ and $H_2$ ensures accurate orbital adherence throughout the integration.
\end{itemize}
This shows that for multi-invariant systems, numerical methods preserving only a subset of invariants may yield unphysical solutions. Fig.~\ref{fi6} demonstrates the superior computational efficiency of DGC3 over the MRRK method based on the coefficients in Table \ref{tab1}.

\begin{table}[htbp]
    \centering
    \setlength{\tabcolsep}{6.5mm}
    \caption{Numerical results for the Euler equations~\eqref{exa:3} at $T = 1000$.}\label{tab5}
    \begin{tabular}{@{}cccccc@{}}
        \toprule
        $h$ & $L^{\infty}$ Error & Rate & $H_1$ Error & $H_2$ Error & Avg. Iter. \\
        \midrule
        1     & 1.1741     & -      & 5.1469e-16 & 3.3307e-16 & 6.0 \\
        1/2   & 0.0979     & 3.5841  & 5.1469e-16 & 4.4409e-16 & 4.3 \\
        1/4   & 0.0061     & 4.0044  & 5.1469e-16 & 4.4409e-16 & 3.5 \\
        1/8   & 3.8334e-04 & 3.9921  & 5.1469e-16 & 4.4409e-16 & 3.0 \\
        \bottomrule
    \end{tabular}
\end{table}

\begin{figure}[htbp]
    \centering
    \begin{subfigure}{0.32\textwidth}
        \centering
        \includegraphics[width=\linewidth]{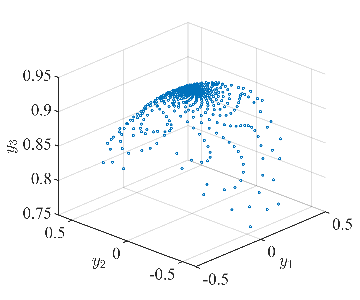}
        \caption{Preserving $H_1$ only}\label{fi5a}
    \end{subfigure}
    \hfill
    \begin{subfigure}{0.32\textwidth}
        \centering
        \includegraphics[width=\linewidth]{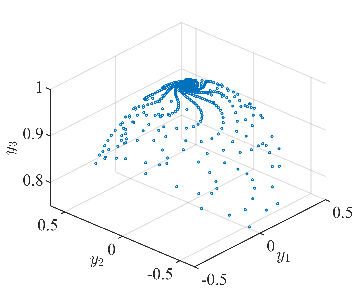}
        \caption{Preserving $H_2$ only}\label{fi5b}
    \end{subfigure}
    \hfill
    \begin{subfigure}{0.32\textwidth}
        \centering
        \includegraphics[width=\linewidth]{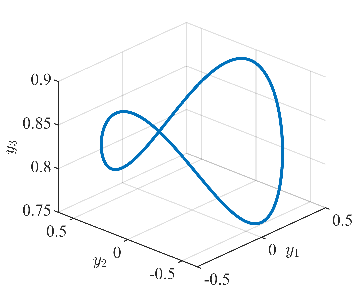}
        \caption{Preserving both invariants}\label{fi5c}
    \end{subfigure}
    \caption{Trajectories under different conservation strategies for the Euler equations with DGC3.}\label{fi5}
\end{figure}
\begin{figure}[htbp]
    \centering
    \includegraphics{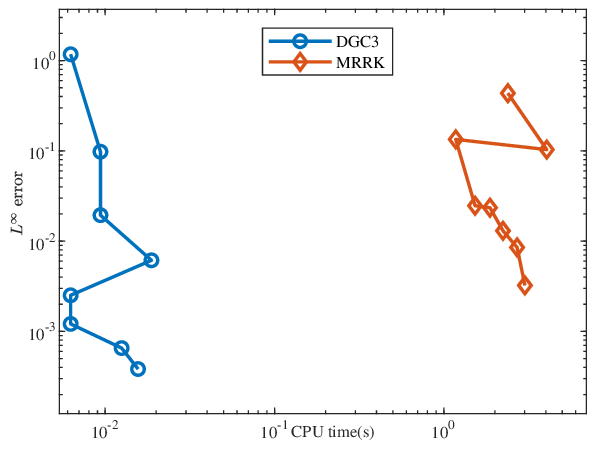}
    \caption{Efficiency curves: $L^{\infty}$ norm errors versus CPU time at $T=1000$.}\label{fi6}
\end{figure}
\begin{example}\label{ex:4}
Consider the Kepler system describing planetary motion:
\begin{align}\label{eq:ex4}
    \begin{bmatrix} p' \\ q' \end{bmatrix} = J \nabla H, \quad 
    J = \begin{bmatrix} 0 & -I \\ I & 0 \end{bmatrix},
\end{align}
with initial conditions
\[
q(0) = (1-e, 0)^\top, \quad p(0) = \left( 0, \sqrt{\frac{1+e}{1-e}} \right)^\top,
\]
where $q = (q_1, q_2)^\top$ denotes position and $p = (p_1, p_2)^\top$ momentum.

The Kepler system possesses two invariants: the Hamiltonian
\[
H(p, q) = \frac{1}{2} \|p\|^2_2 - \frac{1}{\|q\|_2},
\]
and the angular momentum
\[
M = q_1 p_2 - p_1 q_2.
\]
For eccentricity $e \in (0,1)$, the orbiting body follows an elliptical trajectory with the central body at one focus. The exact solution for position is given by
\begin{align*}  
q_1(t) = \cos E - e, \quad q_2(t) = \sqrt{1-e^2} \sin E,  
\end{align*}  
where the eccentric anomaly $E$ satisfies Kepler's equation:
\begin{align*} 
E - e \sin E = t.
\end{align*}
\end{example}

We set the eccentricity to \(e = 0.6\) in our computations.
The Kepler system~\eqref{eq:ex4} is solved using the discrete gradient correction scheme (DGC4), which combines the Runge-Kutta method  with coefficients $(A,b^1)$ from Table~\ref{tab2} with the discrete gradient~\eqref{DG3}. As shown in Table~\ref{tab6}, DGC4 achieves the expected convergence order, preserves both invariants to machine precision, and requires relatively few iterations per time step. 

Fig.~\ref{fi7} shows numerical trajectories obtained with DGC4 at $h = 1/10$ under different conservation strategies. These results demonstrate a phenomenon consistent with Example~\ref{ex:3}: for systems with multiple invariants, preserving only a proper subset may lead to non-physical solutions. 
Fig.~\ref{fi8} provides a direct comparison of computational efficiency. 
It can be observed that the proposed DGC4 method achieves the same level of accuracy faster than the MRRK scheme constructed using the coefficients $(A, b^1, b^2)$ from Table~\ref{tab1}.

\begin{table}[htbp]
    \centering
    \setlength{\tabcolsep}{6mm}
    \caption{Numerical results for the Kepler system at $T = 100$.}\label{tab6}
    \begin{tabular}{@{}cccccc@{}}
        \toprule
        $h$ & $L^{\infty}$ Error & Rate & $H$ Error & $M$ Error & Avg. Iter. \\
        \midrule
        1/10  & 0.0105      & -     & 1.7764e-15 & 4.1633e-16 & 3.0 \\
        1/20  & 9.0552e-04  & 3.5415 & 1.7764e-15 & 4.1633e-16 & 2.5 \\
        1/40  & 6.1083e-05  & 3.8899 & 2.2204e-15 & 4.1633e-16 & 2.2 \\
        1/80  & 3.8972e-06  & 3.9703 & 1.7764e-15 & 4.1633e-16 & 2.0 \\
        \bottomrule
    \end{tabular}
\end{table}

\begin{figure}[htbp]
    \centering
    \begin{subfigure}{0.32\textwidth}
        \centering
        \includegraphics[width=\linewidth]{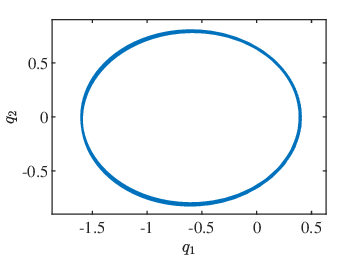}
        \caption{Preserving $H$ only}\label{fi7a}
    \end{subfigure}
    \hfill
    \begin{subfigure}{0.32\textwidth}
        \centering
        \includegraphics[width=\linewidth]{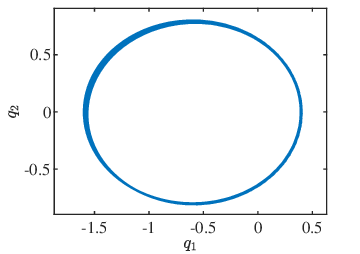}
        \caption{Preserving $M$ only}\label{fi7b}
    \end{subfigure}
    \hfill
    \begin{subfigure}{0.32\textwidth}
        \centering
        \includegraphics[width=\linewidth]{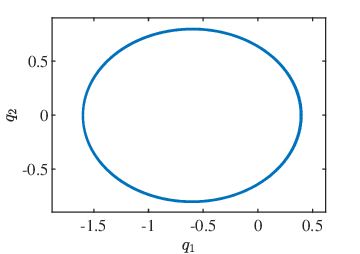}
        \caption{Preserving both invariants}\label{fi7c}
    \end{subfigure}
    \caption{Trajectories under different conservation strategies for the Kepler system with DGC4.}\label{fi7}
\end{figure}
\begin{figure}[htbp]
    \centering
    \includegraphics{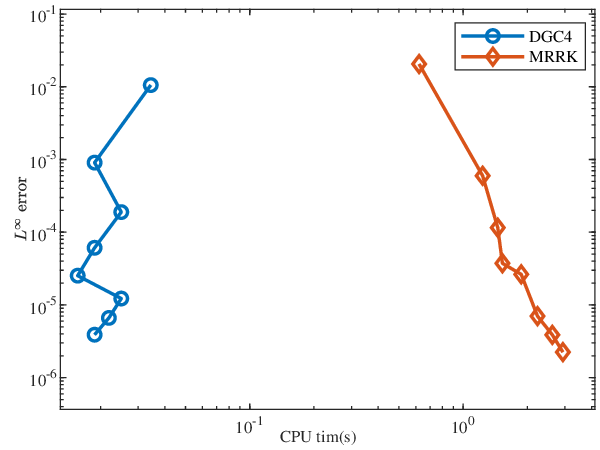}
    \caption{Efficiency curves: $L^{\infty}$ norm errors versus CPU time at $T=100$.}\label{fi8}
\end{figure}
\section{Conclusion}\label{sec6}
The present work introduces the Discrete Gradient Correction (DGC) framework, a predictor-corrector approach that converts arbitrary high-order explicit methods into invariant-preserving integrators through an efficient post-processing step. A central advantage over classical discrete gradient methods is the elimination of the need to construct high-order discrete gradients directly. The proposed framework demonstrates superior robustness compared to projection and relaxed Runge-Kutta methods under large time steps and can be naturally extended to preserve multiple invariants via symmetric linear systems. Theoretical analyses guarantee conservation, order preservation, and stability under mild conditions, with numerical experiments confirming long-term reliability. Application of the DGC method requires known invariant values at each time step. In conservative systems, these values are determined by the initial conditions. Extension to dissipative systems, however, is not straightforward, as the corresponding quantities are no longer conserved. Adapting the framework for such cases, potentially through estimation of time-dependent targets or a reformulated correction procedure, poses a challenge for future research.

\section*{Acknowledgements}
This work is supported by the Sichuan Science and Technology Programs (No. 2024NSFSC0441).

\section*{Declarations}
\textbf{Conflict of interest} The authors have no competing interests to declare that are relevant to the content of this article.

\bibliographystyle{spmpsci}
\bibliography{sn-bibliography}

\end{document}